\newcommand{\N}{\mathbb{N}}
\newcommand{\R}{\mathbb{R}}
\newtheorem{theorem}{Theorem}
\newtheorem{definition}[theorem]{Definition}
\newtheorem{lemma}[theorem]{Lemma}
\newtheorem{proposition}[theorem]{Proposition}
\newtheorem{remark}[theorem]{Remark}
\newtheorem{example}[theorem]{Example}
\numberwithin{theorem}{section}
\author{Janusz Matkowski}
\address{
Faculty of Mathematics, 
Computer Science and Econometrics,
University of Zielona G\'ora,
Szafrana 4a, PL-65-516 Zielona G\'ora,
Poland}
\email{j.matkowski@wmie.uz.zgora.pl}
\author{Pawe{\l} Pasteczka}
\address{
Institute of Mathematics,
Pedagogical University of Cracow,
Podchor\c{a}\.zych 2, PL-30-084 Krak\'ow, Poland}
\email{pawel.pasteczka@up.krakow.pl}
\subjclass[2010]{26E60, 26D15}
\keywords{mean, invariant mean, iteration}
\title{Invariant means and iterates of mean-type mappings}
\dedicatory{Dedicated to Professor Janos Acz\'el on the occasion of his 95th birthday.}
\begin{document}
\begin{abstract}
Classical result states that for two continuous and strict means $M,\,N \colon I^2 \to I$ ($I$ is an interval) there exists a unique $(M,N)$-invariant mean $K \colon I^2 \to I$, i.e. such a mean that $K \circ (M,N)=K$ and, moreover, the sequence of iterates $((M,N)^n)_{n=1}^\infty$ converge to $(K,K)$ pointwise.

Recently it was proved that continuity assumption cannot be omitted in general. We show that if $K$ is a unique $(M,N)$-invariant mean then, under no continuity assumption, $(M,N)^n \to (K,K)$.
\end{abstract}

\maketitle

\section{Introduction}
It is known that \textit{if }$M$\textit{\ and }$N$\textit{\ are continuous
bivariable means in an interval }$I,$ \textit{and the mean-type mapping} $%
\left( M,N\right) $\textit{\ is diagonally contractive, that is, } 
\begin{equation}
\left\vert M\left( x,y\right) -N\left( x,y\right) \right\vert <\left\vert
x-y\right\vert ,\qquad\qquad x,y\in I,\text{ }x\neq y, 
\label{A}
\end{equation}%
\textit{then there is a unique mean }$K:I^{2}\rightarrow I$\textit{\ that is 
}$\left( M,N\right) $\textit{-invariant and, moreover the sequence of
iterates }$\left( \left( M,N\right) ^{n}\right) _{n\in \N}$\textit{\ of the
mean-type mapping }$\left( M,N\right) $\textit{\ converges \ to} $\left(
K,K\right) $ (\cite{JM2013}).

For the results of this type, with more restrictive assumptions (see \cite{BorBor}). In particular, instead of \eqref{A} it was assumed that both means are
strict; in \cite{JM99} it was assumed that at most one mean is not strict,
and condition \eqref{A} appeared first in \cite{JM-ECIT2009} (see also \cite{P.Pasteczka}). Moreover, in all these papers the uniqueness of the
invariant mean was obtained under the condition that it is continuous. In 
\cite{JM2013} it was shown that this condition is superfluous. In \cite{JMforAczel} it is shown that condition \eqref{A} holds if $\ M$ is right-strict
in both variables and $N$ is left-strict in both variables (or vice versa),
that is essentionally weaker condition than the strictness of the means.

In section 3 we give conditions under which the uniqueness of the invariant
mean guarantee the relevant convergence of the sequence of iterates of the
mean-type mapping (see Theorem~\ref{thm:unique}). If $K,M,N:I^{2}\rightarrow I$ are
arbitrary means and $K$ is a unique $\left( M,N\right) $-invariant mean,
then the sequence of iterates $\left( \left( M,N\right) ^{n}\right) _{n\in \N} $ of the mean-type mapping $\left( M,N\right) $ converges to $\left(
K,K\right) $ pointwise in $I^{2}.$)

In section 4 we improve the above result, namely, we show that the
result remains true on replacing the diagonal-contractivity of mean-type
mapping, by demanding only that, for every point outside some iterate of the
mean-type mapping is diagonally contractive, more precisely, that \textit{%
for every point }$\left( x,y\right) \in I^{2}\,,$ $x\neq y$\textit{, there
is a positive integer }$n=n\left( x,y\right) $ \textit{such that } 
\begin{equation*}
\left\vert M_{n}\left( x,y\right) -N_{n}\left( x,y\right) \right\vert
<\left\vert x-y\right\vert ,\text{ }
\end{equation*}%
\textit{where }$\left( M_{n},N_{n}\right) :=\left( M,N\right) ^{n}.$ Hereafter, whenever a pair $(M,N)$ is stated, the sequence $(M_n,\,N_n)$ defined in this way is also given.

\section{Preliminaries}

Let $I\subset \R$ be an interval. Recall that a function $%
M:I^{2}\rightarrow \R$ is called a \textit{mean} in $I,$ if it is
internal, i.e. if

\begin{equation*}
\min \left( x,y\right) \leq M\left( x,y\right) \leq \max \left( x,y\right) 
\text{, \ \ \ \ \ }x,y\in I\text{.}
\end{equation*}

The mean $M$ is called: \textit{strict}, if these inequalities are sharp
unless $x=y$; \textit{symmetric} if $M\left( x,y\right) =M\left(
y,x\right) $ for all $x,y\in I$.

Given means $K,M,N:I^{2}\rightarrow I$ in an interval $I.$ The mean $K$ is
called \textit{invariant} with respect to the mean-type mapping $\left(
M,N\right) :I^{2}\rightarrow I^{2}$, briefly, $\left( M,N\right) $\textit{%
-invariant}, if $K\circ \left( M,N\right) =K$ (\cite{JM-AeqMath99})$.$

\begin{remark}
If $M,N:I^{2}\rightarrow I$ are two means in an interval $I$ then, for all $n\in \N$, the mapping $\left(
M_{n},N_{n}\right)$ is
a mean-type mapping. Moreover every $(M,N)$-invariant mean is $(M_n,N_n)$-invariant for all $n \in\N$.
\end{remark}

\begin{remark}
(\cite{JM-AeqMath99}) If $K:I^{2}\rightarrow I$ is a continuous symmetric
and strictly increasing mean, then for every mean $M:I^{2}\rightarrow I$
there exists a unique function $N:I^{2}\rightarrow I$ such that $K\circ
\left( M,N\right) =K,$ and $N$ is a mean in $I$. Moreover, the mean $N$ is
called complementary to $M$ with respect to $K$.
\end{remark}

The following result is a consequence of bivariable version of Corollary
1 in \cite{JM2013} (see also \cite{BorBor}, \cite{JM99}, \cite{JM-ECIT2009}).

\begin{theorem}
If $M,N:I^{2}\rightarrow I$ are continuous means such that the mean-type
mapping $\left( M,N\right) $ is diagonally-contractive, i.e.%
\begin{equation} 
\left\vert M\left( x,y\right) -N\left( x,y\right) \right\vert <\left\vert
x-y\right\vert \text{, \ \ \ \ }x,y\in I\text{, }x\neq y\text{,}  \label{1}
\end{equation}%
then there is a unique mean $K: I^{2}\rightarrow I$ which is $\left(
M,N\right) $-invariant and, moreover, the sequence of iterates $\left( \left(
M,N\right) ^{n}\right) _{n\in N}$ of the mean-type mapping $\left(
M,N\right) $ converges (uniformly on compact subsets of $I^{2})$ to $\left(
K,K\right) $.
\end{theorem}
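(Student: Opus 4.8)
The plan is to analyze, for each fixed $(x,y)\in I^2$, the orbit $(x_n,y_n):=\left(M_n(x,y),N_n(x,y)\right)$ obtained by iterating $(M,N)$, and to show that its two coordinates are squeezed together to a common limit which I will take as the value $K(x,y)$. Writing $a_n:=\min(x_n,y_n)$ and $b_n:=\max(x_n,y_n)$, the internality of the means $M$ and $N$ gives $a_n\le a_{n+1}\le b_{n+1}\le b_n$, so $(a_n)$ increases, $(b_n)$ decreases, and the nested intervals $[a_n,b_n]$ shrink to a limit interval $[a^\ast,b^\ast]$. Since $b_n-a_n=|x_n-y_n|=|M_n(x,y)-N_n(x,y)|$, the diagonal-contractivity \eqref{1} shows that this quantity is strictly decreasing whenever positive, hence converges to some $\delta\ge 0$; the crux is to prove $\delta=0$.

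I would establish $\delta=0$ by a compactness argument, which I expect to be the main obstacle, since \eqref{1} provides only strict decrease and no uniform contraction ratio. The whole orbit lies in the compact square $[a_0,b_0]^2$, so some subsequence $(x_{n_k},y_{n_k})$ converges to a point $(x^\ast,y^\ast)$ with $|x^\ast-y^\ast|=\delta$. Continuity of $M$ and $N$ then yields $(x_{n_k+1},y_{n_k+1})\to\left(M(x^\ast,y^\ast),N(x^\ast,y^\ast)\right)$, and because the full sequence $|x_n-y_n|$ tends to $\delta$ we obtain $|M(x^\ast,y^\ast)-N(x^\ast,y^\ast)|=\delta$. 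Were $\delta>0$ we would have $x^\ast\ne y^\ast$, contradicting \eqref{1}; hence $\delta=0$, so $a^\ast=b^\ast$ and both $x_n$ and $y_n$ converge to this common value, which I define to be $K(x,y)$.

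It then remains to verify that $K$ is the required mean. Internality is immediate, since $K(x,y)\in\bigcap_n[a_n,b_n]\subset[\min(x,y),\max(x,y)]$. For invariance, I would note that the orbit of $(M(x,y),N(x,y))=(x_1,y_1)$ is the tail of the orbit of $(x,y)$, so its limit is again $K(x,y)$, giving $K\circ(M,N)=K$. For uniqueness I would exploit internality rather than continuity: if $L$ is any $(M,N)$-invariant mean then $L=L\circ(M,N)^n$ for every $n$, so $L(x,y)=L(x_n,y_n)\in[a_n,b_n]$, and letting $n\to\infty$ forces $L(x,y)=K(x,y)$. This is precisely the step where no continuity of the competing invariant mean is needed.

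Finally, to upgrade pointwise convergence to uniform convergence on compact subsets, I would invoke Dini's theorem. The functions $g_n:=|M_n-N_n|$ are continuous (as $M,N$ and hence all iterates are continuous), nonnegative, and by \eqref{1} satisfy $g_{n+1}\le g_n$ pointwise; by the above they decrease to $0$, so on any compact $C\subset I^2$ the convergence $g_n\to 0$ is uniform. Since $M_n(x,y)$, $N_n(x,y)$ and $K(x,y)$ all lie in $[a_n,b_n]$, we have $|M_n-K|\le g_n$ and $|N_n-K|\le g_n$, whence $(M_n,N_n)\to(K,K)$ uniformly on $C$; in particular $K$, being a uniform limit of the continuous iterates $M_n$, is itself continuous.
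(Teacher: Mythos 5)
Your argument is correct, but note that the paper itself does not prove this theorem: it imports it as a consequence of Corollary 1 of \cite{JM2013}, so there is no in-paper proof to match yours against line by line. The closest relative inside the paper is the proof of the last theorem of Section 4, which shares your skeleton --- the sequences $\min(M_n,N_n)$ and $\max(M_n,N_n)$ are monotone and trap the orbit in nested intervals --- but there the gap $|M_n-N_n|$ is killed by the uniform contraction constant $c<1$, with no continuity needed, and neither uniqueness nor uniform convergence is argued. Your version has to work harder exactly where the hypotheses are weaker: diagonal contractivity gives only strict, non-uniform decrease of the gap, and your subsequential-limit argument in the compact square $[\min(x,y),\max(x,y)]^2$ is the standard and correct way to convert continuity of $(M,N)$ into $\delta=0$. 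The other two ingredients are also sound and well chosen: uniqueness via $L(x,y)=L\circ(M,N)^n(x,y)\in[\min(M_n,N_n),\max(M_n,N_n)]$ uses only internality of the competing invariant mean $L$, which is precisely the point (emphasized in the introduction) that no continuity of the invariant mean need be assumed; and Dini's theorem applied to the decreasing continuous functions $g_n=|M_n-N_n|$ tending pointwise to $0$, combined with $|M_n-K|\le g_n$ and $|N_n-K|\le g_n$, delivers the uniform convergence on compacta together with the continuity of $K$. I see no gaps.
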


To formulate some simple sufficient conditions guarantying \eqref{1} let us introduce the following notions:

\begin{definition}
(\cite{JMforAczel}, \cite{ToaderCostin}) Let $M:I^{2}\rightarrow \R$
be a mean in an interval $I\subset \R$ and let $x_{0},y_{0}\in I$.
The mean $M:I^{2}\rightarrow \R$ is called:
\end{definition}

\qquad \textit{left-strict in the first variable at the point }$x_{0}$, if
for all $t\in I$,

\begin{equation*}
t<x_{0}\Longrightarrow M\left( x_{0},t\right) <x_{0}\:;
\end{equation*}%
\qquad \qquad \qquad

\qquad \textit{right-strict in the first variable at the point }$x_{0}$, if
for all $t\in I$,

\begin{equation*}
x_{0}<t\Longrightarrow x_{0}<M\left( x_{0},t\right)\:;
\end{equation*}
\qquad \qquad

\qquad \textit{left-strict in the second variable at the point }$y_{0}$, if
for all $t\in I$,

\begin{equation*}
t<y_{0}\Longrightarrow M\left( t,y_{0}\right) <y_{0}\:;
\end{equation*}%
\qquad \qquad \qquad

\qquad \textit{right-strict in the second variable at the point }$y_{0}$, if
for all $t\in I$,

\begin{equation*}
y_{0}<t\Longrightarrow y_{0}<M\left( t,y_{0}\right)\:.
\end{equation*}%
The mean $M$ is called \textit{left-strict in the first variable\ in a set }$%
A\subset I$, if it is left-strict in the first variable at every point $x_{0}\in A$; and $M$ is
called \textit{left-strict in the first variable}, if it is left-strict in the first variable at
every point. (Analogously one can introduce the remaining notions).

We adapt the convention that if a mean is left-strict (right-strict) in both variables then we call it simply \emph{left-strict} (\emph{right-strict}). 
Then 
\begin{align*}
 M \text{ is left-strict if and only if } \:\: 
 \big(x \ne y \Longrightarrow M(x,y) &< \max(x,y)\big);\\
  M \text{ is right-strict if and only if } \:\: 
  \big(x \ne y \Longrightarrow M(x,y) &> \min(x,y) \big).
\end{align*}

\begin{example}
The projective mean $P_{1}\colon \R^{2} \ni (x,y) \mapsto x$, is left and right-strict in the second
variable, but it is neither left nor right strict at any point in the first
variable. Similarly, the projective mean $P_2\colon \R^{2} \ni (x,y) \mapsto y$, is left and right-strict in
the first variable, but it is neither left nor right strict at any point in the
second variable.

The mean-type mapping $\left( P_{1},P_{2}\right) $ coincides with the
identity of $\R^{2}$, and of course, the sequence of its iterates $%
\left( \left( P_{1},P_{2}\right) ^{n}:n\in \N\right) $ converges to $%
\left( P_{1},P_{2}\right) $. Note that here $P_{1}\neq P_{2}$, moreover,
every mean $M:\R^{2}\rightarrow \R$ is $\left(
P_{1},P_{2}\right) $-invariant.
\end{example}

\begin{example}
Consider the extreme mean $\min :\R^{2}\mathbb{\rightarrow R}\,$.
Since, $t<x_{0}\Longrightarrow \min \left( x_{0},t\right) <x_{0}$ for every $%
t;$ the mean $\min $ is left-strict in the first variable and in the second
variable, but it is not right-strict in the first and the second variable.
Moreover, the extreme mean $\max :\R^{2}\mathbb{\rightarrow R}$ is
right-strict in the first and the second variable, but it is not left-strict
in the first and the second variable.

For every $n\in \N$, the $n$th iterate $\left( \min ,\max \right)
^{n}$ of mean-type mapping $\left( \min ,\max \right) $ has the form%
\begin{equation*}
\left( \min ,\max \right) ^{n}\left( x,y\right) =\left\{ 
\begin{array}{ccc}
\left( x,y\right) & if & x\leq y \\ 
\left( y,x\right) & if & x>y%
\end{array}%
\right.
\end{equation*}%
and, of course, the sequence of iterates of $\left( \left( \min ,\max
\right) ^{n}:n\in \N\right) $ converges. 
%The means $\mathcal{A}$, $%
%\min $, $\max $, and $tmin+\left( 1-t\right) \max $, for every $t\in \left(
%0,1\right) $, are invariant 

On the other hand, one can check that a mean is invariant with respect to the mean-type mapping $\left( \min ,\max \right)$ if and only if it is symmetric.
\end{example}

\begin{remark}
If $M:I^{2}\rightarrow \R$ is a strict mean then it is left and
right-strict in each of the two variables.
\end{remark}

Using the definition of one-side strict means we get the following result

\begin{proposition}%[Matkowski \cite{JMforAczel}] 
Let $M,N:I^{2}\rightarrow I$ be two means in an interval $I$%
. If one of the following conditions is valid:
\begin{enumerate}[(i)]
\item $M$ and $N$ are left-strict,
\item $M$ and $N$ are right-strict,
 \end{enumerate}
then property \eqref{1} holds.
\end{proposition}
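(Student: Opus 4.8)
The plan is to fix an arbitrary pair $(x,y)\in I^{2}$ with $x\neq y$ and to argue that $M(x,y)$ and $N(x,y)$ cannot occupy the two opposite endpoints of the segment determined by $x$ and $y$. Since $M$ and $N$ are means, internality yields $\min(x,y)\le M(x,y)\le \max(x,y)$ and $\min(x,y)\le N(x,y)\le \max(x,y)$, so both values lie in the closed interval with endpoints $x$ and $y$; consequently $\left\vert M(x,y)-N(x,y)\right\vert \le \left\vert x-y\right\vert$ holds automatically. The entire task therefore reduces to excluding equality, and the key observation is that $\left\vert M(x,y)-N(x,y)\right\vert =\left\vert x-y\right\vert$ can occur only when $\{M(x,y),N(x,y)\}=\{\min(x,y),\max(x,y)\}$, i.e. one of the two values equals $\max(x,y)$ and the other equals $\min(x,y)$.

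In case (i) I would invoke the characterization recorded in the excerpt, namely that left-strictness of a mean $F$ means $F(x,y)<\max(x,y)$ whenever $x\neq y$. Applying this to both $M$ and $N$ gives $M(x,y)<\max(x,y)$ and $N(x,y)<\max(x,y)$, so neither value attains $\max(x,y)$. This is incompatible with the equality description of the first paragraph, which requires one of the two values to equal $\max(x,y)$; hence the strict inequality \eqref{1} follows. Case (ii) is handled symmetrically: right-strictness gives $M(x,y)>\min(x,y)$ and $N(x,y)>\min(x,y)$, so neither value attains $\min(x,y)$, again contradicting the equality description and yielding \eqref{1}.

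I do not expect a genuinely hard step here; the only point requiring care is the passage from the two a priori equality configurations (either $M=\max$ and $N=\min$, or $M=\min$ and $N=\max$) to the single remark that equality forces \emph{some} value to equal $\max(x,y)$ in case (i), respectively $\min(x,y)$ in case (ii). Because one-sided strictness is assumed in both variables at once, the characterizations phrased through $\max$ and $\min$ subsume both orderings $x<y$ and $x>y$ simultaneously, so no separate case distinction on the sign of $x-y$ is needed.
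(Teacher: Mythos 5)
Your argument is correct and coincides with the paper's own proof: both reduce the failure of \eqref{1} (via internality) to the configuration $\{M(x,y),N(x,y)\}=\{\min(x,y),\max(x,y)\}$ and then rule it out using the characterization of left-strictness (resp.\ right-strictness) as $M(x,y)<\max(x,y)$ (resp.\ $M(x,y)>\min(x,y)$) for $x\neq y$. No substantive difference.
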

\begin{proof}
Suppose to the contrary that there exists $x,\,y \in I$, $x \ne y$ such that 
$$|M(x,y)-N(x,y)|\ge|x-y|=|\max(x,y)-\min(x,y)|\:.$$
Binding this inequality with
$$\min(x,y)\le M(x,y)\le\max(x,y) \text{ and } \min(x,y)\le N(x,y)\le\max(x,y)$$
we obtain that $M(x,y)$ and $N(x,y)$ are two different endpoints of the interval $[\min(x,y),\,\max(x,y)]$. Thus $\{M(x,y),N(x,y)\}=\{\min(x,y),\:\max(x,y)\}$.

On the other hand, if $M$ and $N$ are both left-strict then none of them can be equal to $\max(x,y)$. Similarly, if $M$ and $N$ are both right-strict then none of them can be equal to $\min(x,y)$. These contradictions end the proof. 
\end{proof}

\subsection{Weakly contractive mean-type mappings}
In this section we deal with some relaxation of the assumption \eqref{A}. For two means $M,\ N \colon I^2 \to I$, the mean-type mapping $(M,N)$ is \emph{weakly contractive} if for
every elements $x,\,y \in I$ with $x\neq y$ there is a positive integer $n=n\left( x,y\right)$ such that
\begin{equation}
\left\vert M_{n}\left( x,y\right) -N_{n}\left( x,y\right) \right\vert
<\left\vert x-y\right\vert\:. \label{E:sh}
\end{equation}%

Next lemma provide a complete characterization of weak-contractivity in terms of $(M_2,N_2)$.
\begin{lemma}
Let $M,\ N \colon I^2 \to I$ be means.
Mean-type mapping $(M,N)$ is weakly contractive if and only if mean-type mapping $(M_2,N_2)$ is contractive.
\end{lemma}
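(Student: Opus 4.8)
The plan is to reduce everything to one monotonicity observation together with the ``corner'' fact already used in the proof of the Proposition. For a fixed pair $(x,y)$ write $d_n:=\abs{M_n(x,y)-N_n(x,y)}$ for the diagonal distance along its orbit, with the convention $d_0:=\abs{x-y}$ (the identity iterate). The engine of the argument is that $(d_n)_{n\ge 0}$ is non-increasing: since $M$ and $N$ are means, both coordinates of $(M,N)(u,v)=\big(M(u,v),N(u,v)\big)$ lie in $[\min(u,v),\max(u,v)]$, so $\abs{M(u,v)-N(u,v)}\le\abs{u-v}$, and applying this with $(u,v)=\big(M_n(x,y),N_n(x,y)\big)$ yields $d_{n+1}\le d_n$. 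Once this is in place, the whole statement turns into a question of \emph{where} such a non-increasing sequence is allowed to first drop strictly below $d_0$.

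The implication ``$(M_2,N_2)$ contractive $\Rightarrow$ $(M,N)$ weakly contractive'' is immediate: contractivity of $(M_2,N_2)$ means precisely $d_2<\abs{x-y}$ for every $x\ne y$, so one may take $n=2$ in the definition of weak contractivity. I would dispose of this direction in one line.

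For the converse I would argue by contradiction. Suppose $(M,N)$ is weakly contractive but $(M_2,N_2)$ is not contractive, so some pair $x_0\ne y_0$ satisfies $d_2=\abs{x_0-y_0}=d_0$. Monotonicity gives $d_0\ge d_1\ge d_2=d_0$, hence $d_0=d_1=d_2$. Now I invoke the elementary fact from the proof of the Proposition: if $\abs{M(u,v)-N(u,v)}=\abs{u-v}$ with $u\ne v$, then $M(u,v)$ and $N(u,v)$ are the two endpoints of $[\min(u,v),\max(u,v)]$, i.e. $(M,N)(u,v)\in\{(u,v),(v,u)\}$. Applying this to $d_1=d_0$ and to $d_2=d_1$ shows that both $(M,N)(x_0,y_0)$ and $(M,N)^2(x_0,y_0)$ lie in the two-element set $S=\{(x_0,y_0),(y_0,x_0)\}$.

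The crux, and the only place where two iterates are genuinely needed, is to conclude that the entire forward orbit is then trapped in $S$, forcing $d_n=d_0$ for all $n$ and contradicting weak contractivity. If $(M,N)(x_0,y_0)=(x_0,y_0)$ the point is fixed and the orbit is constant; otherwise $(M,N)(x_0,y_0)=(y_0,x_0)$, and the extra equality $d_2=d_0$ now pins down $(M,N)(y_0,x_0)\in S$ as well, so $(M,N)$ maps $S$ into itself. In every case the orbit of $(x_0,y_0)$ is periodic of period $1$ or $2$ and lives in $S$, on which $d$ is constantly $\abs{x_0-y_0}$; thus $d_n=\abs{x_0-y_0}$ for all $n$, contradicting the existence of an $n$ with $d_n<\abs{x_0-y_0}$. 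This case analysis also explains why the analogous statement with $(M_1,N_1)$ in place of $(M_2,N_2)$ must fail: a single swap $(x_0,y_0)\mapsto(y_0,x_0)$ is perfectly compatible with a strict drop at the next step, and only looking one iterate further rules it out.
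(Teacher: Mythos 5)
Your proof is correct and takes essentially the same route as the paper's: both arguments rest on the monotonicity of $\left|M_n(x,y)-N_n(x,y)\right|$ along orbits together with the observation that equality forces $(M,N)$ to act as the identity or the swap on $\{(x,y),(y,x)\}$, trapping the orbit in that two-element set. The only difference is organizational --- the paper shows directly that validity of \eqref{E:sh} for some $n$ implies its validity for $n=2$, while you prove the contrapositive.
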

\begin{proof}
We prove that if the inequality \eqref{E:sh} is satisfied for some triple $(x,y,n)$ then it is also valid for $(x,y,2)$. As $M$ and $N$ are means we get 
$$|M(x,y)-N(x,y)| \le |x-y|.$$
Therefore 
$$ \text{either}\quad
|M(x,y)-N(x,y)| < |x-y|\quad \text{ or }\quad |M(x,y)-N(x,y)|=|x-y|\:.
$$
In the first case we obtain validity of \eqref{E:sh} for the triple $(x,y,1)$ which, by mean property, implies its validity for $(x,y,2)$.

In the second case we get, by mean property, 
	$\{M(x,y),N(x,y)\}=\{x,y\}$. If $(M,N)(x,y)=(x,y)$ then $(M,N)^n(x,y)=(x,y)$ for all $n \in \N$ contradicting the assumption. Thus we obtain $M(x,y)=y$ and $N(x,y)=x$. Applying the same argumentation to the pair $(y,x)$ we get that either
$$|M_2(x,y)-N_2(x,y)| =|M(y,x)-N(y,x)|<|x-y|,$$
which implies \eqref{E:sh} for the triple $(x,y,2)$
or 
$$(M_2,N_2)(x,y)=(M,N)(y,x)=(x,y).$$
However in the second subcase we get 
$$(M_n,N_n)(x,y)=\begin{cases} (y,x) & \text{ if } n \text{ is odd,}\\
(x,y) & \text{ if } n \text{ is even,}
\end{cases}
$$
which implies that \eqref{E:sh} is valid for no $n \in \N$ contradicting the assumption. 

As the converse implication is trivial, the proof is complete
\end{proof}

\begin{proposition}
Let $M,N:I^{2}\rightarrow I$ be two means in an interval $I$%
. If the following conditions are valid:
\begin{enumerate}[(i)]
 \item $M$ is right-strict in the first variable or $N$ is left-strict in the second variable,
\item $M$ is left-strict in the second variable or $M$ is right-strict in the second variable or $N$ is left-strict in the first variable or $N$ is right-strict in the first variable,
\item $M$ is left-strict in the first variable or $N$ right-strict in the second variable,
 \end{enumerate}
then the mapping $(M,N)$ is diagonally-contractive.
\end{proposition}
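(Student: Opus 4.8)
The plan is to argue by contradiction, reusing the reduction already used in the proof of the preceding proposition. Suppose $(M,N)$ is not diagonally-contractive; then there is a pair $(x,y)\in I^2$ with $x\neq y$ and $|M(x,y)-N(x,y)|\ge|x-y|$. Since $M$ and $N$ are means, both values lie in $[\min(x,y),\max(x,y)]$, so $|M(x,y)-N(x,y)|\le|x-y|$; hence equality holds and, exactly as before, $M(x,y)$ and $N(x,y)$ are the two distinct endpoints of that interval, i.e. $\{M(x,y),N(x,y)\}=\{\min(x,y),\max(x,y)\}$. Everything then reduces to showing that conditions (i)--(iii) together forbid every way in which this last equality can be realised.

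I would next split into four configurations, according to two independent binary choices: whether $x<y$ or $x>y$, and whether it is $M$ or $N$ that attains $\max(x,y)$. The first choice genuinely matters and cannot be removed by a symmetry argument, because the hypotheses distinguish the first from the second variable. For each configuration the contradiction is produced by a single one-sided strictness, selected by the following dictionary: a value equal to $\max(x,y)$ is impossible once the mean is left-strict in whichever variable momentarily carries the larger argument, and a value equal to $\min(x,y)$ is impossible once the mean is right-strict in whichever variable momentarily carries the smaller argument.

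Applying this dictionary, the configuration $x<y$, $M(x,y)=\min(x,y)$, $N(x,y)=\max(x,y)$ is excluded precisely by (i) (here ``$M$ right-strict in the first variable'' forces $M(x,y)>\min(x,y)$, while ``$N$ left-strict in the second variable'' forces $N(x,y)<\max(x,y)$, and one of the two must hold). Symmetrically, the configuration $x>y$, $M(x,y)=\max(x,y)$, $N(x,y)=\min(x,y)$ is excluded by (iii). The two remaining ``crossed'' configurations are handled by (ii): the case $x<y$ with $M$ at the maximum is killed by ``$M$ left-strict in the second variable or $N$ right-strict in the first variable'', and the case $x>y$ with $M$ at the minimum is killed by ``$M$ right-strict in the second variable or $N$ left-strict in the first variable''.

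I expect the difficulty to be bookkeeping rather than conceptual: in each case one must carefully distinguish the \emph{slot} (first or second variable) from the \emph{value} it currently holds (the larger or the smaller argument), so as to invoke the correct one among the eight one-sided notions. The one genuinely substantive point is that condition (ii) must be used as two separate alternatives, one per crossed configuration; a single alternative is not enough. For instance, the pair $(\min,P_1)$ satisfies (i), (iii) and the alternative ``$M$ left-strict in the second variable'' of (ii), yet $|\min(x,y)-P_1(x,y)|=|x-y|$ whenever $x>y$, so it fails to be diagonally-contractive---the surviving configuration being exactly the one excluded by the complementary alternative ``$M$ right-strict in the second variable or $N$ left-strict in the first variable''.
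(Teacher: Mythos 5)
Your case-by-case dictionary is computed correctly, and so is your closing diagnosis: the proposition \emph{as literally stated} is false, and $(\min,P_1)$ is a genuine counterexample. It satisfies (i) because $P_1$ is left-strict in the second variable, and (ii) and (iii) because $\min$ is left-strict in both variables, yet $\left|\min(x,y)-P_1(x,y)\right|=\left|x-y\right|$ whenever $x>y$. The obstruction you isolate is the real one: ruling out the configuration $\{M(x,y),N(x,y)\}=\{\min(x,y),\max(x,y)\}$ at a \emph{single} point with $x>y$ and $M$ at the minimum needs specifically ``$M$ right-strict in the second variable or $N$ left-strict in the first variable'', and the four-fold disjunction (ii) only guarantees that \emph{some one} of its four alternatives holds, not that particular pair.

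Where you and the paper part ways is in what is actually being proved. The paper's own proof does not establish diagonal contractivity: for $x<y$ it shows $(M,N)(x,y)\neq(x,y)$ (your configuration excluded by (i)), $(M,N)(y,x)\neq(y,x)$ (your configuration excluded by (iii)), and that the two-cycle $(M,N)(x,y)=(y,x)$, $(M,N)(y,x)=(x,y)$ is impossible. In the two-cycle \emph{all four} equalities $M(x,y)=y$, $N(x,y)=x$, $M(y,x)=x$, $N(y,x)=y$ must hold simultaneously, so any single alternative of (ii) already yields a contradiction --- which is exactly why (ii) suffices there but not for your pointwise reading. By the preceding Lemma this gives that $(M,N)$ is \emph{weakly} contractive (equivalently, that $(M_2,N_2)$ is contractive), which is also all that the subsequent theorem requires; your counterexample is consistent with this, since $(\min,P_1)^2$ already maps every point onto the diagonal. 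So the conclusion of the proposition should read ``weakly contractive'' rather than ``diagonally-contractive''; your argument is the correct proof of the variant obtained by strengthening (ii) to the conjunction of your two pairs of alternatives, and your counterexample shows the stated conclusion cannot be rescued without some such change.
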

\begin{proof}
 Fix $x,\,y \in I$, $x \ne y$. It suffices to prove that 
$$\big\{(x,y),(y,x)\big\} \not\subset \big\{(M,N)^n(x,y)\colon n \in \mathbb{N}\big\}\:.$$
First assume that $x<y$. Applying property (i) we obtain that $(M,N)(x,y) \ne (x,y)$. In view of property (iii) we get $(M,N)(y,x) \ne (y,x)$. 

The only remaining case is that $(M,N)(x,y)=(y,x)$ and $(M,N)(y,x)=(x,y)$. Equivalently,
$$M(x,y)=y,\quad N(x,y)=x,\quad M(y,x)=x,\quad \text{ and }\quad N(x,y)=y\:.$$
However these equalities cannot be simultaneously valid as they are excluded by consecutive alternatives in (ii), respectively.
The case $x>y$ is completely analogous.
\end{proof}

\bigskip

\bigskip
\section{Uniqueness of invariant means and convergence of iterates of
mean-type mappings}
\begin{definition}
Let $M,N:I^{2}\rightarrow I$ be means in an interval $I$. Define the \emph{diagonal basin} $\mathcal{A}_{M,N} \subset I^2$ by
$$
\Big\{(x,\,y) \in I^2 \colon \big((M_n,N_n)(x,\,y)\big)_{n=1}^\infty \text{ converges to some point on the diagonal } \Big\}.
$$
\end{definition}
\begin{theorem}
 Let $M,N:I^{2}\rightarrow I$ be means in an interval $I$. Then $\mathcal{A}_{M,N}$ is a maximal subset of $I^2$ such that all $(M,N)$-invariant means are equal to each other.
\end{theorem}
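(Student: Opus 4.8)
The plan is to reformulate $(M,N)$-invariance dynamically and then to determine, at each point, the exact set of values an invariant mean may take there. Write $T := (M,N)$, so that $K$ is $(M,N)$-invariant exactly when $K \circ T = K$, whence $K \circ T^n = K$, i.e. $K(x,y) = K\big(M_n(x,y),N_n(x,y)\big)$ for every $n$. For a fixed $p=(x,y)$ set $a_n(p) := \min(M_n(p),N_n(p))$ and $b_n(p) := \max(M_n(p),N_n(p))$, and $\Delta_n(p) := [a_n(p),b_n(p)]$. Since $M_{n+1}(p)$ and $N_{n+1}(p)$ are means of $M_n(p),N_n(p)$, the intervals $\Delta_n(p)$ are nested and decreasing; hence $a(p) := \lim_n a_n(p)$ and $b(p) := \lim_n b_n(p)$ exist with $\bigcap_n \Delta_n(p) = [a(p),b(p)]$. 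The first key observation is that $p \in \mathcal{A}_{M,N}$ if and only if $a(p)=b(p)$: the iterates converge to a diagonal point precisely when these nested intervals collapse to a single value.

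Next I would prove that all $(M,N)$-invariant means agree on $\mathcal{A}_{M,N}$. Fix $p \in \mathcal{A}_{M,N}$ and an arbitrary invariant mean $K$. Invariance together with internality gives $a_n(p) \le K(p) = K\big(M_n(p),N_n(p)\big) \le b_n(p)$ for every $n$; letting $n \to \infty$ and using $a(p)=b(p)$, the squeeze forces $K(p)=a(p)$. Thus every invariant mean takes the common value $a(p)$ at $p$, so $\mathcal{A}_{M,N}$ indeed has the required property. I want to stress that this step uses only internality and a squeeze, never continuity of $K$; this is exactly what lets the statement dispense with the continuity hypotheses of the classical theorem.

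For maximality I would exhibit, for each $p \notin \mathcal{A}_{M,N}$, two invariant means separating $p$. Define $K_{\ast}(q) := a(q)$ and $K^{\ast}(q) := b(q)$ for all $q \in I^2$. Each is internal, because $a(q),b(q) \in [\min(q),\max(q)]$ follows from the monotonicity of the defining sequences together with $a_1(q)\ge \min(q)$ and $b_1(q)\le\max(q)$; hence both are means. Each is $(M,N)$-invariant: from $(M_n,N_n)\circ T = (M_{n+1},N_{n+1})$ a shift of index in the limit yields $K_\ast(Tq)=K_\ast(q)$ and likewise $K^\ast(Tq)=K^\ast(q)$. Now if $p \notin \mathcal{A}_{M,N}$ then $a(p)<b(p)$, so $K_\ast(p)\ne K^\ast(p)$, i.e. two $(M,N)$-invariant means disagree at $p$. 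Consequently no set strictly larger than $\mathcal{A}_{M,N}$ can keep all invariant means equal, which gives maximality — in fact $\mathcal{A}_{M,N}$ is the largest such set, since any set on which all invariant means coincide must avoid every $p\notin\mathcal{A}_{M,N}$.

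The routine parts are the nesting and monotonicity bookkeeping and the squeeze. The heart of the argument, and the step I would treat most carefully, is the construction of the extremal invariant means $K_\ast$ and $K^\ast$: one must check simultaneously that the pointwise limits $a(\cdot)$ and $b(\cdot)$ are internal on \emph{all} of $I^2$ and that they are genuinely $T$-invariant, since it is precisely their separation at points outside the diagonal basin that delivers maximality.
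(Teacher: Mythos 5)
Your argument is correct, and it is built on the same underlying objects as the paper's proof: your $K_\ast(p)=\lim_n\min\bigl(M_n(p),N_n(p)\bigr)$ and $K^\ast(p)=\lim_n\max\bigl(M_n(p),N_n(p)\bigr)$ are exactly the means $L$ and $U$ that the paper obtains as the $\liminf$ and $\limsup$ of the shuffled sequence $\bigl(x,y,M_1(x,y),N_1(x,y),M_2(x,y),\dots\bigr)$ (the shuffled $\liminf$ coincides with the limit of the increasing sequence of minima, and dually for the $\limsup$). The difference is one of self-containment: the paper imports from \cite{P.Pasteczka} the fact that $L$ and $U$ are the smallest and largest $(M,N)$-invariant means and then only has to observe that $L(x,y)=U(x,y)$ is equivalent to $(x,y)\in\mathcal{A}_{M,N}$, whereas you verify from first principles everything the theorem actually needs, namely that $K_\ast$ and $K^\ast$ are internal and $T$-invariant (via the index shift $a_n(Tq)=a_{n+1}(q)$) and that every invariant mean is squeezed between them at each point. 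Your route buys independence from the external reference and makes explicit that only internality and the iterated invariance $K\circ T^n=K$ are used, at the cost of redoing a small part of what the citation provides; the paper's route is shorter but leans on the stronger imported statement that $L$ and $U$ are extremal among \emph{all} invariant means, of which you only need the "sandwich" consequence. Both correctly establish that $\mathcal{A}_{M,N}$ is in fact the largest set on which all invariant means agree, since outside it the two constructed invariant means already disagree.
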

\begin{proof}
For $x,y \in I$. Let $a$ be a sequence which is a shuffling of $M_n$ and $N_n$, i.e
$$
a=(x,y,M_1(x,y),N_1(x,y),M_2(x,y),N_2(x,y),M_3(x,y),N_3(x,y),\dots)\:.
$$
By \cite{P.Pasteczka} we obtain that
$$L(x,y):=\liminf_{n \to \infty} a_n, \qquad 
U(x,y):=\limsup_{n \to \infty} a_n $$
are the smallest and the biggest $(M,\,N)$-invariant means, respectively. 

It is necessarily and also sufficient to prove that $L(x,y)=U(x,y)$ if and only if $(x,y)\in \mathcal{A}_{M,N}$.

If $L(x,y)=U(x,y)$ then we obtain that the sequence $a$ converges, whence $(M_n(x,y))_{n \in \N}$ and $(N_n(x,y))_{n \in \N}$ converge to a common limit, i.e. $(x,y) \in\mathcal{A}_{M,N}$.

Conversely, if $(x,y) \in \mathcal{A}_{M,N}$ then we get 
$$\lim_{n \to \infty} M_n(x,y)=\lim_{n \to \infty} N_n(x,y),$$
which implies that the sequence $a$ is convergent, and therefore $L(x,y)=U(x,y)$. As $L$ and $U$ is the smallest and the greatest $(M,N)$-invariant mean, we have that all $(M,N)$-invariant means have the same value at the point $(x,y)$.
\end{proof}

\begin{theorem} \label{thm:unique}
Let $K,M,N:I^{2}\rightarrow I$ be
arbitrary means.
$K$ is a unique $\left( M,N\right)$-invariant mean if and only if the sequence of iterates $\left( \left( M,N\right) ^{n}\right) _{n\in
N} $ of the mean-type mapping $\left( M,N\right) $ converges to $\left(
K,K\right) $ pointwise in $I^{2}.$\end{theorem}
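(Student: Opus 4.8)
The plan is to build directly on the two facts extracted in the proof of the preceding theorem. Writing $a$ for the shuffled sequence $a=(x,y,M_1(x,y),N_1(x,y),M_2(x,y),N_2(x,y),\dots)$, the functions $L(x,y):=\liminf_{n\to\infty}a_n$ and $U(x,y):=\limsup_{n\to\infty}a_n$ are, by \cite{P.Pasteczka}, respectively the smallest and the largest $(M,N)$-invariant mean. The two consequences I intend to exploit are: every $(M,N)$-invariant mean $J$ is squeezed as $L\le J\le U$ pointwise; and for a fixed $(x,y)$ one has $L(x,y)=U(x,y)$ precisely when $a$ converges, i.e.\ when $M_n(x,y)$ and $N_n(x,y)$ share a common limit.

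For the implication ``$K$ is the unique invariant mean $\Rightarrow$ convergence'', I would argue as follows. Since $L$ and $U$ are themselves $(M,N)$-invariant means, uniqueness forces $L=U=K$. Fixing $(x,y)\in I^2$, this yields $\liminf_{n\to\infty}a_n=\limsup_{n\to\infty}a_n=K(x,y)$, so the whole sequence $a$ converges to $K(x,y)$; in particular its two interlaced subsequences $(M_n(x,y))_{n\in\N}$ and $(N_n(x,y))_{n\in\N}$ both converge to $K(x,y)$. Hence $(M,N)^n(x,y)\to(K(x,y),K(x,y))$, which is exactly the claimed pointwise convergence.

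For the converse, assume $(M,N)^n\to(K,K)$ pointwise and verify the three parts of the conclusion. First, $K$ is a mean: as a pointwise limit of the means $M_n$ it inherits internality, $\min(x,y)\le K(x,y)\le\max(x,y)$. Second, $K$ is $(M,N)$-invariant: putting $q:=(M,N)(x,y)$ and using the shift identity $(M,N)^n(q)=(M,N)^{n+1}(x,y)$, the left-hand side converges to $(K(q),K(q))$ by the hypothesis applied \emph{at the point} $q$, while the right-hand side, being a tail of the sequence converging at $(x,y)$, converges to $(K(x,y),K(x,y))$; comparing limits gives $K(M(x,y),N(x,y))=K(x,y)$, i.e.\ $K\circ(M,N)=K$. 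Third, for uniqueness, the pointwise convergence gives $L(x,y)=U(x,y)=K(x,y)$ for every $(x,y)$, so any invariant mean $J$, squeezed by $L\le J\le U$, must coincide with $K$.

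I expect the only genuinely non-mechanical step to be the verification of $K\circ(M,N)=K$ in the converse direction. The point worth stressing is that this is carried out with \emph{no} continuity assumption on $M$, $N$, or $K$: it relies solely on the algebraic shift identity $(M,N)^{n+1}=(M,N)^n\circ(M,N)$ together with applying the convergence hypothesis at the shifted argument $q=(M,N)(x,y)$. Everything else collapses to the squeeze $L\le J\le U$ and the elementary fact that $\liminf=\limsup$ characterizes convergence, both of which are already packaged in the preceding theorem.
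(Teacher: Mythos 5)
Your proposal is correct and follows essentially the same route as the paper: both directions rest on the fact from \cite{P.Pasteczka} that the liminf $L$ and limsup $U$ of the shuffled sequence are the smallest and largest $(M,N)$-invariant means, with uniqueness equivalent to $L=U=K$. Your extra direct verification in the converse that $K$ is a mean and is $(M,N)$-invariant via the shift identity is sound but redundant, since $K=L=U$ already makes $K$ an invariant mean.
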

\begin{proof}
For $x,y \in I$. Let $a$ be a sequence which is a shuffling of $M_n$ and $N_n$, i.e
$$
a=(x,y,M_1(x,y),N_1(x,y),M_2(x,y),N_2(x,y),M_3(x,y),N_3(x,y),\dots)\:.
$$
By \cite{P.Pasteczka} we obtain that
$$L(x,y):=\liminf_{n \to \infty} a_n, \qquad 
U(x,y):=\limsup_{n \to \infty} a_n $$
are the smallest and the biggest $(M,\,N)$-invariant means, respectively. 

$(\Rightarrow)$ As $K$ is a unique $(M,N)$-invariant mean we get $K=L=U$. It implies that the sequence $(a_n)$ is convergent to $K(x,y)$. Therefore both $M_n$ and $N_n$ converge to $K$ pointwise on $I^2$. In particular its Cartesian product $(M,N)^n=(M_n,N_n)$ converges to $(K,K)$ pointwise on the same set.

$(\Leftarrow)$ We know that the sequence $(a_n)$ is convergent to $K(x,y)$, in particular $K(x,y)=L(x,y)=U(x,y)$. As $x,\,y$ were taken arbitrarily we have $K=L=U$. 
It implies that the smallest and the biggest $(M,\,N)$-invariant mean coincide providing the uniqueness.
\end{proof}

\section{Mean-type mappings with diagonally-contractive iterates and
invariant means}

\begin{theorem}
If $M,N:I^{2}\rightarrow I$ are continuous means such that $(M,N)$ is weakly contractive then there exists a unique mean $K:I^{2}\rightarrow I$ which is $\left( M,N\right) $-invariant and, moreover, the sequence of iterates $\left( \left( M,N\right)
^{n}\right) _{n\in \N}$ of the mean-type mapping $\left( M,N\right) $
converges (uniformly on compact subsets of $I^{2})$ to $\left( K,K\right) .$
\end{theorem}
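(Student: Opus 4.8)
The plan is to reduce to the diagonally-contractive case by passing to the second iterate. Since $(M,N)$ is weakly contractive, the preceding characterization lemma shows that $(M_2,N_2)=(M,N)^2$ is diagonally-contractive. As $M_2$ and $N_2$ are continuous (being compositions of continuous means), the cited theorem for diagonally-contractive mean-type mappings applies to $(M_2,N_2)$: there is a unique $(M_2,N_2)$-invariant mean $K\colon I^2\to I$, and the iterates $\big((M_2,N_2)^n\big)_{n\in\N}=\big((M,N)^{2n}\big)_{n\in\N}$ converge to $(K,K)$ uniformly on compact subsets of $I^2$. This already disposes of the even-indexed subsequence of $\big((M,N)^n\big)_{n\in\N}$.

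Next I would upgrade this to convergence of the full sequence. The key observation is that $(M,N)$ fixes every diagonal point: for $t\in I$ one has $M(t,t)=N(t,t)=t$, hence $(M,N)(t,t)=(t,t)$, and moreover $K(x,y)$ always lies on the diagonal of the target. Writing $(M,N)^{2n+1}=(M,N)\circ(M,N)^{2n}$ and using continuity of $(M,N)$, the pointwise convergence $(M,N)^{2n}(x,y)\to(K(x,y),K(x,y))$ yields $(M,N)^{2n+1}(x,y)\to(M,N)(K(x,y),K(x,y))=(K(x,y),K(x,y))$. Thus both subsequences converge to $(K,K)$, so $(M,N)^n\to(K,K)$ pointwise on $I^2$.

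To obtain uniformity on compacta I would fix a compact square $[a,b]^2\subset I^2$, which suffices since every compact subset of $I^2$ is contained in such a square. Because $M$ and $N$ are means, $(M,N)$ maps $[a,b]^2$ into itself, so all iterates remain in $[a,b]^2$, and $K$ takes values in $[a,b]$ there. Given $\varepsilon>0$, uniform continuity of $(M,N)$ on the compact set $[a,b]^2$ supplies a $\delta>0$; the uniform convergence of the even subsequence makes $(M,N)^{2n}(x,y)$ eventually $\delta$-close to the diagonal point $(K(x,y),K(x,y))$, whence $\big|(M,N)^{2n+1}(x,y)-(K(x,y),K(x,y))\big|<\varepsilon$ uniformly on $[a,b]^2$. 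Combined with the even subsequence, this gives $(M,N)^n\to(K,K)$ uniformly on $[a,b]^2$, hence on all compact subsets of $I^2$.

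Finally, the pointwise convergence $(M,N)^n\to(K,K)$ together with Theorem~\ref{thm:unique} shows that $K$ is the unique $(M,N)$-invariant mean, which completes the argument. The only genuinely delicate point is the uniform (rather than merely pointwise) convergence of the odd-indexed subsequence; this is exactly where the invariance of $[a,b]^2$ under $(M,N)$ and the uniform continuity of $(M,N)$ on that compact square are needed, the rest being a routine combination of the classical theorem, the characterization lemma, and Theorem~\ref{thm:unique}.
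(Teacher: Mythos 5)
Your proof is correct, and while it opens exactly as the paper does --- passing to $(M_2,N_2)$ via the characterization lemma and invoking the classical theorem for continuous diagonally-contractive mappings --- the second half takes a genuinely different route. The paper first settles \emph{uniqueness}: at most one $(M,N)$-invariant mean because every such mean is $(M_2,N_2)$-invariant, at least one by an existence result from \cite{P.Pasteczka}; it then feeds uniqueness into the $(\Rightarrow)$ direction of Theorem~\ref{thm:unique} to get pointwise convergence of the full sequence, and cites \cite[Corollary 4.4~(ii)]{JM2013} for uniformity on compacta. You instead establish \emph{convergence} of the full sequence first --- the even subsequence from the classical theorem, the odd subsequence by writing $(M,N)^{2n+1}=(M,N)\circ(M,N)^{2n}$ and using that $(M,N)$ is continuous and fixes the diagonal --- and only then deduce uniqueness from the $(\Leftarrow)$ direction of Theorem~\ref{thm:unique}. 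Your route buys self-containedness: you need neither the existence theorem from \cite{P.Pasteczka} nor the external corollary for uniform convergence, since your uniform-continuity argument on an invariant square $[a,b]^2$ handles the odd subsequence directly (and correctly: the iterates and the limit $(K,K)$ all stay in $[a,b]^2$ by internality). The paper's route is shorter on the page because it delegates both of those steps to cited results. One small point worth making explicit in your write-up is that $M(t,t)=N(t,t)=t$ follows from internality, so $(K(x,y),K(x,y))$ really is a fixed point of $(M,N)$; you state this but it deserves the one-line justification.
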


\begin{proof}
In view of Theorem~\ref{thm:unique} we get that $(M_2,N_2)$ is a contractive mean-type mapping, which is also continuous. Applying the result from \cite{JM2013} (see the beginning of this paper) we obtain that there exists a unique $(M_2,N_2)$-invariant mean. As every $(M,N)$-invariant mean is also $(M_2,N_2)$-invariant we get that there exists at most one $(M,N)$-invariant mean. 

On the other hand, applying some general results from \cite{P.Pasteczka}, we have that (under no assumptions for means) there exists at least one $(M,N)$-invariant mean. Therefore there exists a unique $(M,N)$ invariant mean; say $K \colon I^2 \to I$.

We can now apply Theorem~\ref{thm:unique} to obtain that $((M,N)^n)_{n \in \N}$ converges to $(K,K)$ pointwise on $I^2$. Moreover, in view of \cite[Corollary 4.4 (ii)]{JM2013}, this convergence is uniform on every compact subset of $I^2$.
\end{proof}

\begin{remark}
Let us emphasize that the continuity assumption in the previous theorem cannot be omitted; cf. \cite[section 3.1]{P.Pasteczka} for details.
\end{remark}

\begin{theorem}
Let $M,N:I^{2}\rightarrow I$ be (not necessarily continuous) means and $c\in %
\left[ 0,1\right) $. If \textit{for every point }$\left( x,y\right) \in
I^{2}\,,$ $x\neq y$\textit{, there is a positive integer }$n=n\left(
x,y\right) $ \textit{such that } 
\begin{equation*}
\left\vert M_{n}\left( x,y\right) -N_{n}\left( x,y\right) \right\vert
<c\left\vert x-y\right\vert ,\text{ }
\end{equation*}%
\textit{where }$\left( M_{n},N_{n}\right) :=\left( M,N\right) ^{n},$ then
there is a unique mean $K:I^{2}\rightarrow I$ that is $\left( M,N\right) $%
-invariant and, moreover the sequence of iterates $\left( \left( M,N\right)
^{n}\right) _{n\in N}$ of the mean-type mapping $\left( M,N\right) $
converges (uniformly on compact subsets of $I^{2})$ to $\left( K,K\right) .$
\end{theorem}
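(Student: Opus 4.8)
The plan is to reduce everything to Theorem~\ref{thm:unique}: once we know that there is a \emph{unique} $(M,N)$-invariant mean $K$, the pointwise convergence $(M,N)^n\to(K,K)$ is automatic, so the first task is just to establish uniqueness. Write $\delta(u,v):=|u-v|$ for the diagonal gap of a point and $\delta_n(p):=\delta\big((M,N)^n p\big)$, so that the hypothesis reads: for every $p=(x,y)$ with $x\neq y$ there is an $n$ with $\delta_n(p)<c\,\delta(p)$. Since $M$ and $N$ are means, both coordinates of $(M,N)^{n+1}p$ lie in the interval with endpoints $M_n(p)$ and $N_n(p)$; hence $\delta_{n+1}(p)\le\delta_n(p)$, i.e.\ the gap is nonincreasing along every orbit. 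The engine of the proof is to upgrade the pointwise hypothesis into $\delta_n(p)\to0$ for every $p$.

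First I would show $\delta_n(p)\to0$. Fix $p=(x,y)$ with $x\neq y$ and let $\ell:=\lim_n\delta_n(p)\ge0$, which exists by monotonicity. Suppose $\ell>0$. Since $c<1$ we have $\ell<\ell/c$, so there is an index $n$ with $\delta_n(p)<\ell/c$. The point $q:=(M,N)^n p$ satisfies $\delta(q)=\delta_n(p)\ge\ell>0$, so the hypothesis applies and yields an $m$ with $\delta_m(q)<c\,\delta(q)=c\,\delta_n(p)<\ell$; but $\delta_m(q)=\delta_{n+m}(p)\ge\ell$, a contradiction. Hence $\ell=0$. The nested intervals $I_n:=[\min(M_n,N_n),\max(M_n,N_n)]$ then decrease (again by the mean property) with lengths $\delta_n\to0$, so $\bigcap_n I_n$ is a single point $w$ and both $M_n(p),N_n(p)\to w$. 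Thus every off-diagonal orbit converges to a diagonal point; diagonal points are fixed by $(M,N)$, so $\mathcal{A}_{M,N}=I^2$. By the preceding theorem characterising the diagonal basin, all $(M,N)$-invariant means coincide on $\mathcal{A}_{M,N}=I^2$, giving at most one invariant mean; since at least one exists (the smallest invariant mean $L$ produced via \cite{P.Pasteczka}, already used in the proof of Theorem~\ref{thm:unique}), there is a unique $(M,N)$-invariant mean $K$, and Theorem~\ref{thm:unique} delivers $(M,N)^n\to(K,K)$ pointwise.

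The main obstacle is the \emph{uniform} convergence on compact sets, where continuity is no longer available to invoke \cite{JM2013}. I would first record the estimate that reduces everything to the gaps: by invariance $K(x,y)=K\big((M,N)^n(x,y)\big)=K(M_n,N_n)\in I_n$, and $M_n,N_n\in I_n$ as well, so $|M_n(x,y)-K(x,y)|\le\delta_n(x,y)$ and likewise for $N_n$. Hence it suffices to prove $\sup_{p\in C}\delta_n(p)\to0$ for every compact $C\subset I^2$. This is a Dini-type statement for the nonincreasing sequence $\delta_n\searrow0$, but the functions $\delta_n=\delta\circ(M,N)^n$ need not be continuous, so Dini's theorem does not apply and pointwise decrease to $0$ need not force uniformity. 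This is exactly where the \emph{uniform} gap $c<1$ (rather than mere weak contractivity) must do real work, through control of waiting times: along any orbit starting in $C$ the gap can fall below $c$ times its current value only finitely often before dropping under a prescribed $\varepsilon$ — at most $\lceil\log_c(\varepsilon/\operatorname{diam}C)\rceil$ times. I would try to convert this finite bound on the number of contraction events into a uniform bound on the number of iterates, e.g.\ by showing that the nested ``slow sets'' $W_m:=\{p\in C:\delta_j(p)\ge c\,\delta(p)\text{ for all }j\le m\}$ become empty for large $m$. Carrying out this last step without continuity — that is, extracting enough (semi)compactness from the uniform contraction to exclude arbitrarily long waiting times — is the crux of the whole argument.
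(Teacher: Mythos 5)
Your first two paragraphs are correct and are, in substance, the paper's own argument. The paper runs the contraction step with the monotone sequences $\min(M_n,N_n)$ (increasing) and $\max(M_n,N_n)$ (decreasing): if their limits differed, the hypothesis applied at $(M_n,N_n)(x,y)$ would produce $|M_{n+k_n}-N_{n+k_n}|<c\,|M_n-N_n|$ and hence a subsequence of gaps tending to $0$, a contradiction. Your version (pick $n$ with $\delta_n(p)<\ell/c$ and apply the hypothesis once) is the same idea, slightly slicker. Your wrap-up — $\mathcal{A}_{M,N}=I^2$, uniqueness via the diagonal-basin theorem and existence via the extremal invariant means $L,U$, then pointwise convergence from Theorem~\ref{thm:unique} — is actually \emph{more} complete than the paper's, whose proof simply ends with ``let $K$ be the pointwise limit'' and never explicitly verifies uniqueness.

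The step you flag as the crux and do not complete — uniform convergence on compact sets — is not completed in the paper either: the printed proof stops at the pointwise limit and never returns to the parenthetical uniformity claim. Your suspicion that no Dini-type argument is available here is justified, and in fact no argument will close this gap, because the uniform statement is false for discontinuous means. Take $I=[0,1]$, $c=\tfrac12$. For $k\ge1$ put $d_k=\tfrac12+\tfrac1{k+2}$, $\varepsilon_k=\tfrac1{100k^2}$, and $q_{k,j}:=(j\varepsilon_k,\,d_k)$ for $0\le j\le k$; these points are pairwise distinct (distinct $k$ gives distinct second coordinates, distinct $j$ distinct first coordinates). Define $(M,N)(q_{k,j}):=q_{k,j+1}$ for $0\le j\le k-1$, and $M(x,y)=N(x,y)=\tfrac{x+y}{2}$ at every other point. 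Since $j\varepsilon_k\le(j+1)\varepsilon_k\le k\varepsilon_k<d_k$, both coordinates of every image lie between the $\min$ and $\max$ of the argument, so $M,N$ are means. Every orbit reaches the diagonal in finitely many steps, so your hypothesis holds at every off-diagonal point (with $|M_n-N_n|=0<c|x-y|$ for a suitable $n$), the invariant mean is unique, and $(M,N)^n\to(K,K)$ pointwise. Yet $|M_k-N_k|(q_{k,0})=d_k-k\varepsilon_k>\tfrac12$ for every $k$, so $\sup_{[0,1]^2}|M_n-N_n|>\tfrac12$ for all $n$ and the convergence cannot be uniform on the compact set $[0,1]^2$. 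So the only gap in your proposal coincides with an overstatement in the theorem itself: without continuity, only the pointwise conclusion (which you proved) is available, and your ``slow sets'' programme cannot succeed.
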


\begin{proof}
Take an arbitrary $\left( x,y\right) \in I^{2}$. Since $M$ and $N$ are means
in $I,$ for every $n\in \N$, we have%
\begin{equation*}
\min \left( x,y\right) \leq M\left( x,y\right) \leq \max \left( x,y\right) 
\text{, \ \ \ \ \ }\min \left( x,y\right) \leq N\left( x,y\right) \leq \max
\left( x,y\right) \text{,}
\end{equation*}%
hence%
\begin{align*}
\min \left( x,y\right) &\leq \min \left( M\left( x,y\right) ,N\left(
x,y\right) \right) \\
&\leq M\left( M\left( x,y\right) ,N\left( x,y\right)
\right) \leq \max \left( M\left( x,y\right) ,N\left( x,y\right) \right) \leq
\max \left( x,y\right) \text{,}
\end{align*}%
and%
\begin{align*}
\min \left( x,y\right) &\leq \min \left( M\left( x,y\right) ,N\left(
x,y\right) \right)\\
&\leq N\left( M\left( x,y\right) ,N\left( x,y\right)
\right) \leq \max \left( M\left( x,y\right) ,N\left( x,y\right) \right) \leq
\max \left( x,y\right) \text{,}
\end{align*}%
whence%
\begin{align*}
&\min \left( x,y\right) \leq\min \left( M_{1}\left( x,y\right)
,N_{1}\left( x,y\right) \right) \leq \min \left( M_{2}\left( x,y\right)
,N_{2}\left( x,y\right) \right)  \\
&\quad \leq \max \left( M_{2}\left( x,y\right) ,N_{2}\left( x,y\right) \right)
\leq \max \left( M_{1}\left( x,y\right) ,N_{1}\left( x,y\right) \right) \leq
\max \left( x,y\right) ,
\end{align*}%
and, by induction, for every $n\in \N$,%
\begin{align*}
&\min \left( x,y\right)  \leq \min \left( M_{n}\left( x,y\right)
,N_{n}\left( x,y\right) \right) \leq \min \left( M_{n+1}\left( x,y\right)
,N_{n+1}\left( x,y\right) \right)  \\
&\quad \leq \max \left( M_{n+1}\left( x,y\right) ,N_{n+1}\left( x,y\right)
\right) \leq \max \left( M_{n}\left( x,y\right) ,N_{n}\left( x,y\right)
\right) \leq \max \left( x,y\right) .
\end{align*}%
Thus the sequence $\left( \min \left( M_{n}\left( x,y\right) ,N_{n}\left(
x,y\right) \right) :n\in \N\right) $ is increasing and the sequence $%
\left( \max \left( M_{n}\left( x,y\right) ,N_{n}\left( x,y\right) \right)
:n\in \N\right)$ is decreasing; and both are convergent. Clearly, if 
$x=y$ or $M_{k}\left( x,y\right) =N_{k}\left( x,y\right) $ for some positive
integer $k$, then $M_{n}\left( x,y\right) =N_{n}\left( x,y\right) $ for all $%
n\geq k$, and consequently we have 
\begin{equation*}
\lim_{n\rightarrow \infty }M_{n}\left( x,y\right) =\lim_{n\rightarrow \infty
}N_{n}\left( x,y\right) .
\end{equation*}%
Assume that 
\begin{equation*}
\lim_{n\rightarrow \infty }\min \left( M_{n}\left( x,y\right) ,N_{n}\left(
x,y\right) \right) <\lim_{n\rightarrow \infty }\max \left( M_{n}\left(
x,y\right) ,N_{n}\left( x,y\right) \right) .
\end{equation*}%
In particular, we have $M_{n}\left( x,y\right) \neq N_{n}\left( x,y\right) $
for every $n\in \N$. By the assumption, for every $n\in \N$
there is $k=k_{n}\in \N$ such that 
\begin{align*}
\vert M_{k_{n}}\left( M_{n}\left( x,y\right) ,N_{n}\left( x,y\right)
\right) &- N_{k_{n}}\left( M_{n}\left( x,y\right) ,N_{n}\left( x,y\right)
\right) \vert \\
&\qquad<c\left\vert M_{n}\left( x,y\right) -N_{n}\left(
x,y\right) \right\vert 
\end{align*}%
that is that%
\begin{equation*}
\left\vert M_{n+k_{n}}\left( x,y\right) -N_{n+k_{n}}\left( x,y\right)
\right\vert <c\left\vert M_{n}\left( x,y\right) -N_{n}\left( x,y\right)
\right\vert ,
\end{equation*}%
which leads to a contradiction. Thus%
\begin{equation*}
\lim_{n\rightarrow \infty }\min \left( M_{n}\left( x,y\right) ,N_{n}\left(
x,y\right) \right) =\lim_{n\rightarrow \infty }\max \left( M_{n}\left(
x,y\right) ,N_{n}\left( x,y\right) \right) ,
\end{equation*}%
and consequently, 
\begin{equation*}
\lim_{n\rightarrow \infty }M_{n}\left( x,y\right) =\lim_{n\rightarrow \infty
}N_{n}\left( x,y\right) .
\end{equation*}%
Finally, let $K$ be a pointwise limit of $(M_n)$ or $(N_n)$, as they are equal.
\end{proof}

% ------------------------------------------------------------------------

\begin{thebibliography}{1}
\bibitem{BorBor} J.~M. Borwein, P.~B. Borwein, \textit{Pi and the AGM}, A
Wiley-Interscience Publication, New York/ Chichester/ Brisbane/ Toronto/
Singapore, 1987.

\bibitem{JM99} J. Matkowski, \textit{Iterations of mean-type mappings and
invariant means}, Ann. Math. Silesianae \textbf{13} (1999), 211--226.

\bibitem{JM-AeqMath99} J. Matkowski, \textit{Invariant and complementary
means}, Aequationes Math. \textbf{57} (1999), 87--107.

\bibitem{JM-ECIT2009} J. Matkowski, \textit{Iterations of the mean-type
mappings, }(ECIT' 08), (A.N. Sharkovsky, I.M. Susko (Eds.), Grazer Math.
Ber., Bericht Nr. \textbf{354} (2009), 158--179.

\bibitem{JM2013} J. Matkowski, \textit{Iterations of the mean-type mappings
and uniqueness of invariant means}, Annales Univ. Sci. Budapest., Sect.
Comp. \textbf{41} (2013), 145--158.

\bibitem{JMforAczel} mauscript "in statu nascendi" devoted to J. Acz\'{e}l

\bibitem{P.Pasteczka} P. Pasteczka, 
\textit{Invariant property for discontinuous mean-type mappings}, Publ. Math. Debrecen (to appear).

\bibitem{ToaderCostin} G. Toader, I. Costin, \textit{Means in mathematical
analysis}, Academic Press, 2018.
\end{thebibliography}
\end{document}